\newtheorem{thm}{Theorem}[section]
\newtheorem{prop}[thm]{Proposition}
\newtheorem{lem}[thm]{Lemma}
\newcommand{\Z}{\mathbb{Z}}
\newcommand{\pic}[1]{\begin{minipage}{0.8in} \[\includegraphics[width=0.75in]{{#1}} \]\end{minipage}}
\title{Presentations of Roger and Yang's Kauffman Bracket Arc Algebra}
\author{Martin Bobb}
\address{Department of Mathematics, University of Texas at Austin, Austin, TX 78712, USA}
\author{Dylan Peifer}
\address{Department of Mathematics, Cornell University, Ithaca, NY 14853, USA}
\author{Stephen Kennedy}
\address{Department of Mathematics, Carleton College, Northfield, MN 55057, USA}
\author{Helen Wong}
\address{Department of Mathematics, Carleton College, Northfield, MN 55057, USA}
\thanks{The last author was supported in part by NSF Grant DMS-1105692. }
\begin{document}

\maketitle

\begin{abstract}
We provide a presentation of the Roger and Yang's Kauffman bracket arc algebra for the once-punctured torus and punctured spheres with three or fewer punctures.  
\\
\end{abstract}



Roger and Yang's Kauffman bracket arc algebra is a generalization of the well-known Kauffman bracket skein algebra of a surface, whose definition in \cite{Tu88, Pr91} is based on Kauffman's skein theoretic description of the Jones polynomial for knots and links (\cite{Jo85, Ka87}).   Later, the skein algebra  of a hyperbolic surface was interpreted as a quantization of the surface's Teichm\"uller space from hyperbolic geometry (\cite{TuraevPoisson, BullockFrohmanJKB, PrzSikora}).   Interest thus grew for the skein algebra, as a construction important in the Jones polynomial skein theory but also deeply related to Teichm\"uller theory.  

Following this body of work on the skein algebra, Roger and Yang introduced a ``skein algebra of arcs'' to be a skein theory version of Penner's decorated Teichm\"uller space.  In \cite{Penner}, Penner defined the decorated Teichm\"uller space as an alternate way to describe the hyperbolic structures of a surface using lengths of both simple closed curves and arcs between punctures on the surface (each decorated with a choice of horoball).  Roger and Yang defined their arc algebra as a quantization of Penner's decorated Teichm\"uller space, roughly in the same way that the skein algebra is a quantization of the usual Teichm\"uller space.   The arc algebra includes both simple closed curves and arcs between  punctures on the surface.  In addition to the two usual bracket skein relations for framed links, there are two extra relations for arcs and loops near the punctures. 

Understanding the algebraic structure of Roger and Yang's  arc algebra is an important first step to exploring its role as an intermediary between quantum topology and hyperbolic geometry.  Here, we seek  finite presentations of the  arc algebra for some simple surfaces, namely for spheres with three or fewer punctures and for tori with one or no punctures.   In the companion paper \cite{BobbPeiferWong}, we show that the  arc algebra is finitely generated.   Our work is inspired by analogous statements for the  skein algebra found in \cite{BuPr00} and \cite{Bu99}, respectively.

\section{The Kauffman Bracket Arc Algebra} \label{sec:defns}

Let $F_{g,n}$ denote a compact, orientable surface of genus $g$ with $n$ points $ p_1, p_2, \ldots, p_n$ removed.   The points removed are the \emph{punctures}.    Let $A$ be an indeterminate, with formal square roots $A^{\frac12}$ and $A^{-\frac12}$.   In addition, let there be an indeterminate $v_i$ associated to each puncture $p_i$, and let $R_n = \Z[A^{\pm\frac{1}{2}}][v_1^{\pm1}, v_2^{\pm1}, \dots, v_n^{\pm1}]$  denote the ring of Laurent polynomials in the commuting variables $A^{\frac12}$ and $v_1, \ldots, v_n$.      

A framed curve in the thickened surface $F_{g,n} \times [0,1]$ is the union of  framed knots and framed arcs that go from puncture to puncture.   (See \cite{RoYa14} for a precise definition.)    
Let $\mathcal G(F_{g,n})$ be the $R_n$-module freely generated by the framed curves in $F_{g,n} \times [0,1]$, up to isotopy, and let $\mathcal K(F_{g,n})$ be the submodule generated by terms of the following four forms:
\begin{align*}
&1)
\quad
\begin{minipage}{.5in}\includegraphics[width=\textwidth]{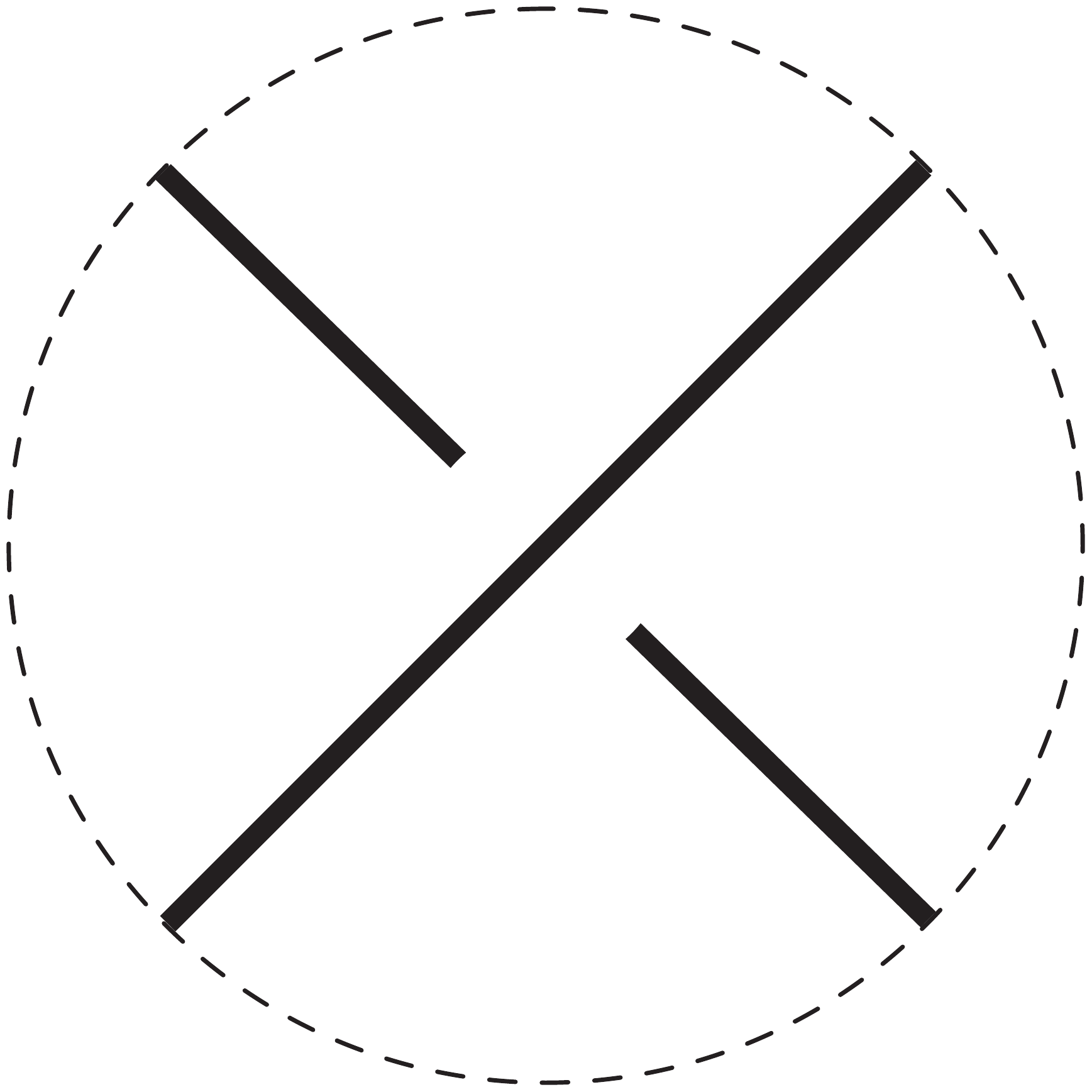}\end{minipage} 
-  \left( A\begin{minipage}{.5in}\includegraphics[width=\textwidth]{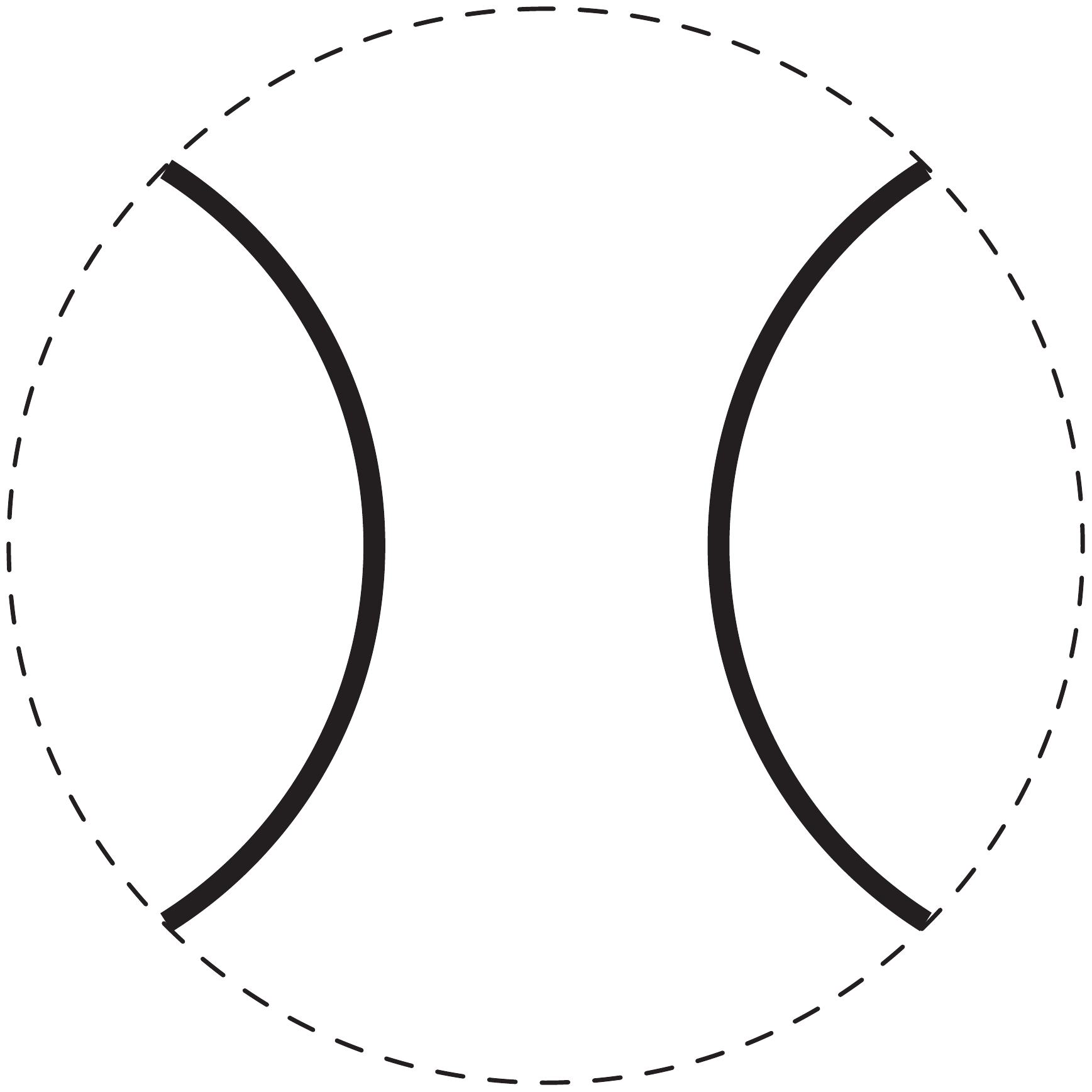}\end{minipage} 
+A^{-1}\begin{minipage}{.5in}\includegraphics[width=\textwidth]{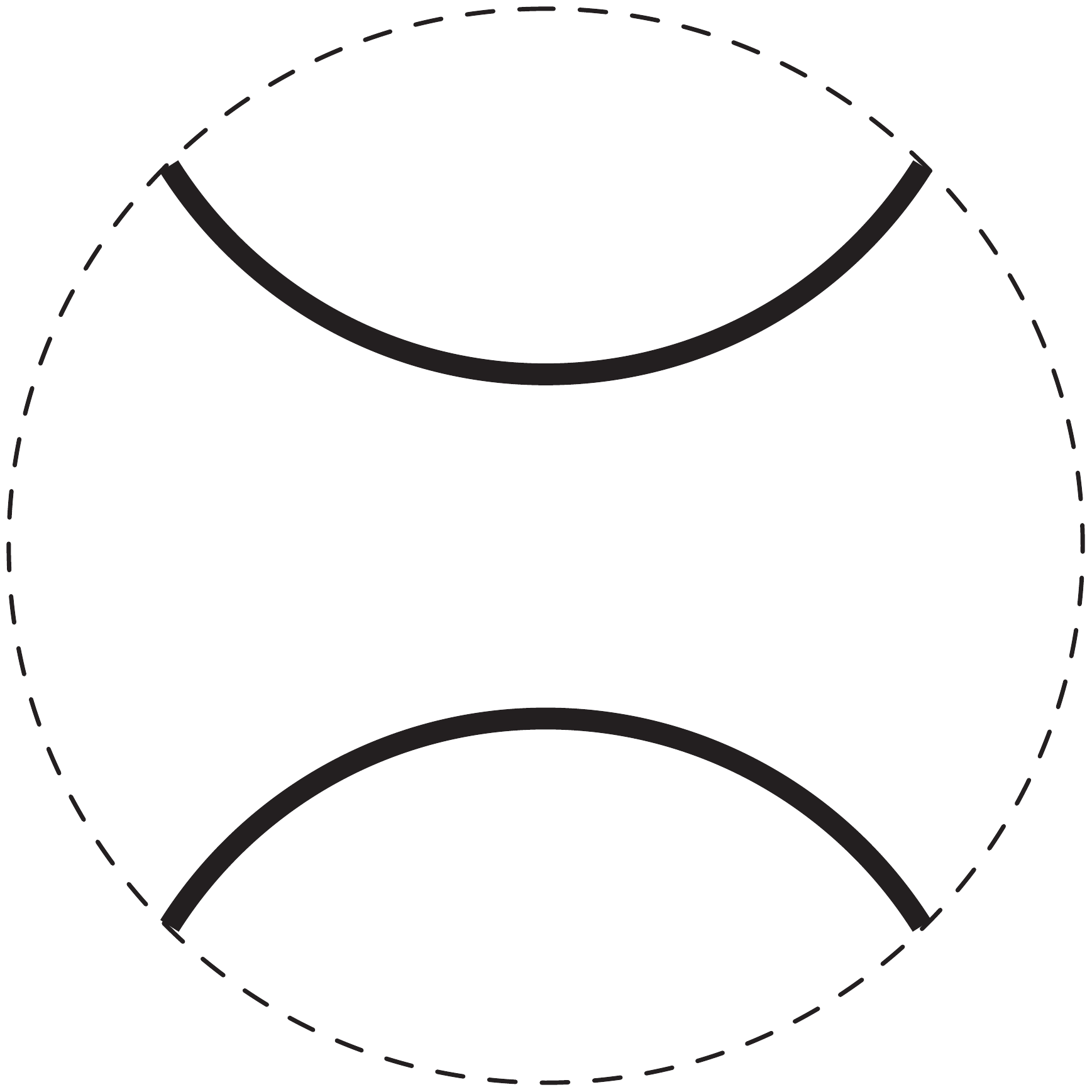}\end{minipage}  \right)
&\text{Skein Relation}\\
&2)
\quad 
v_i \begin{minipage}{.5in}\includegraphics[width=\textwidth]{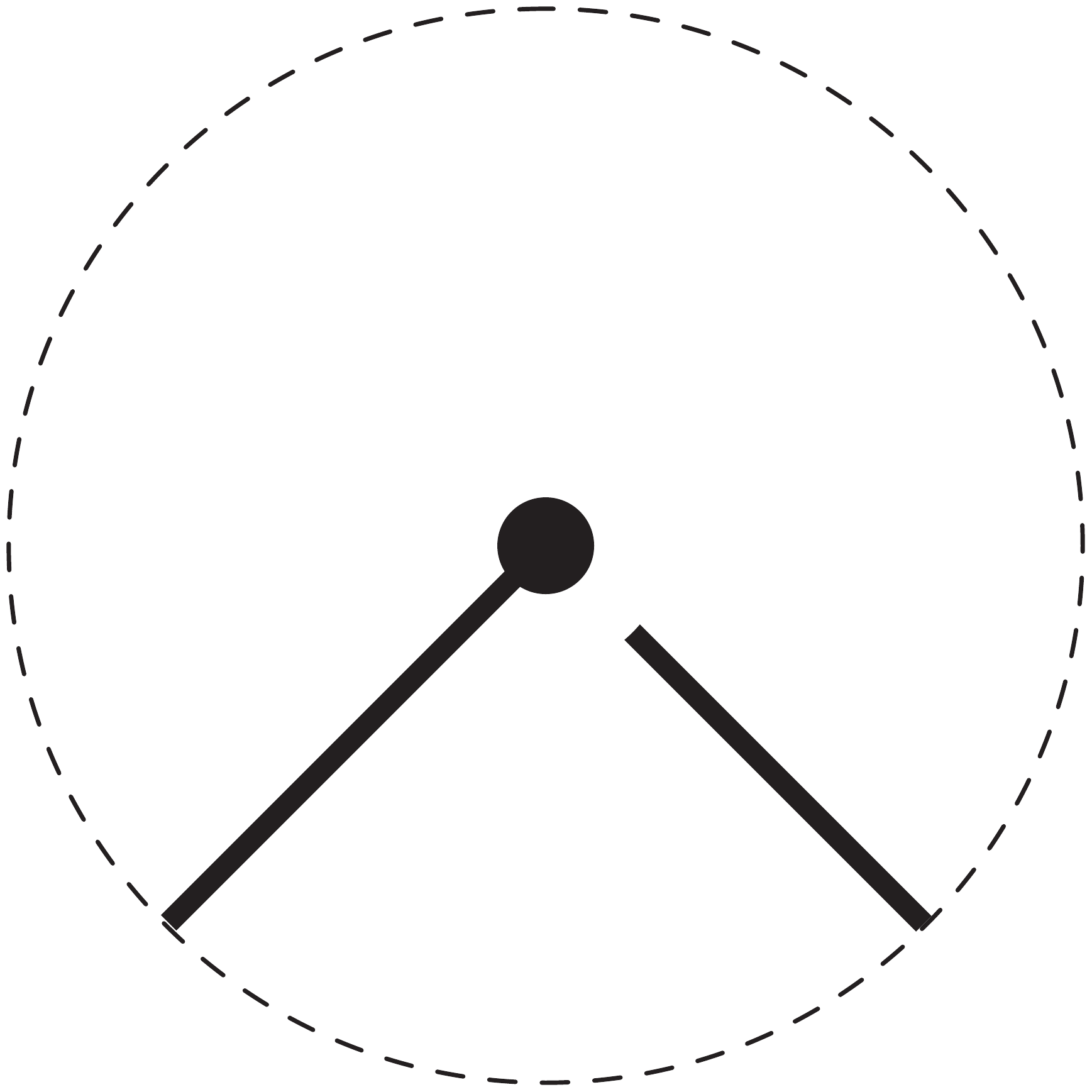}\end{minipage} 
- \left(  A^\frac{1}{2}\begin{minipage}{.5in}\includegraphics[width=\textwidth]{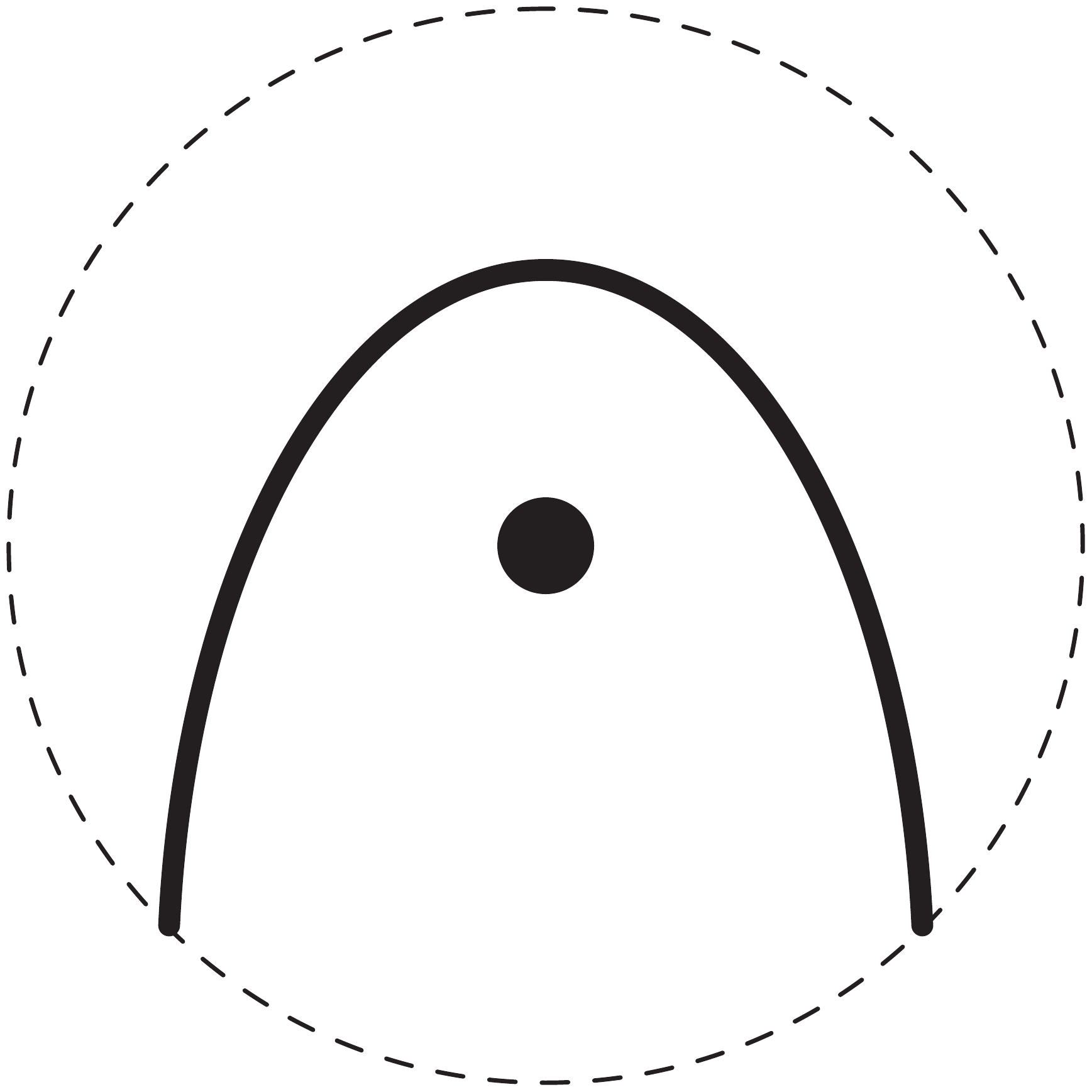}\end{minipage} 
+ A^{-\frac{1}{2}}\begin{minipage}{.5in}\includegraphics[width=\textwidth]{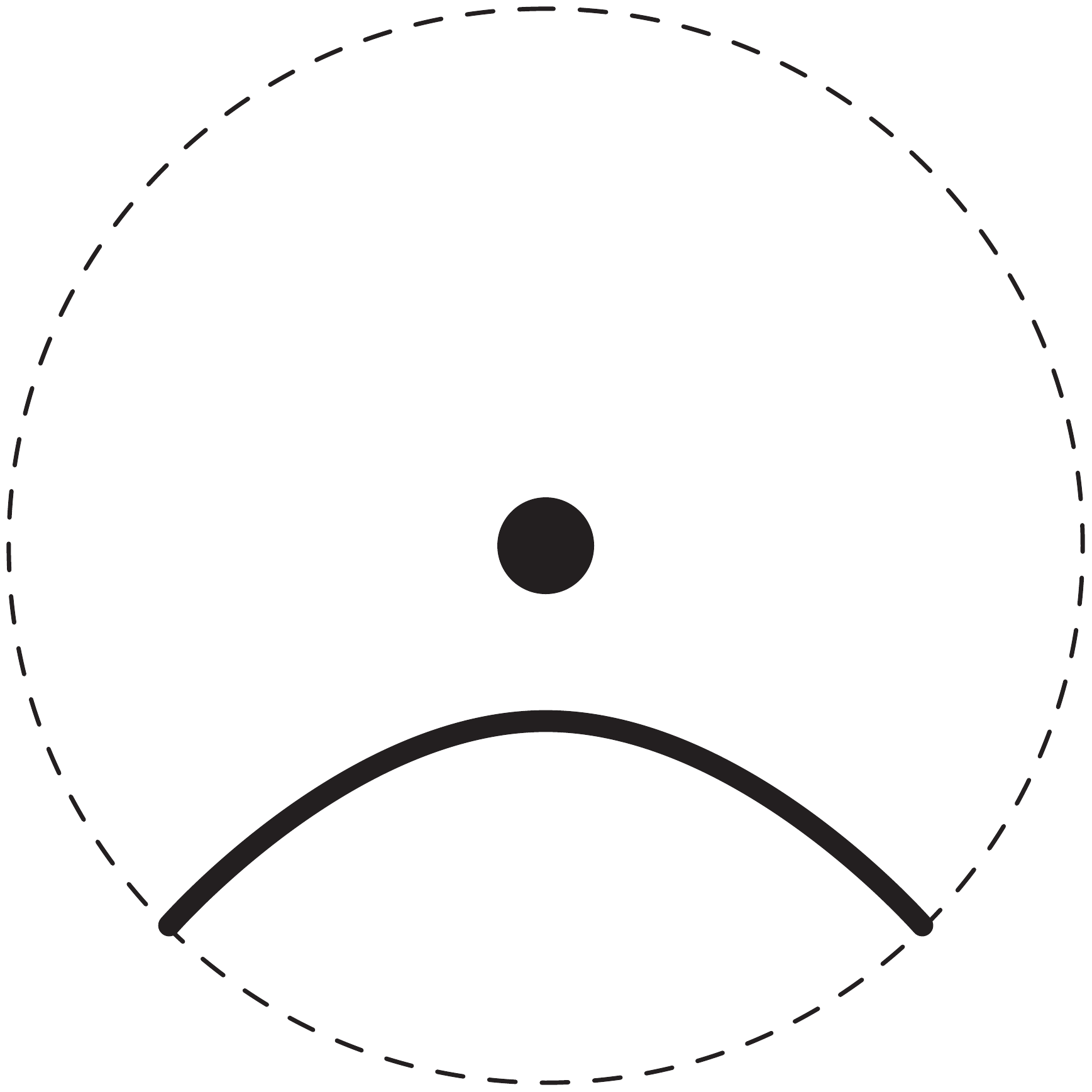}\end{minipage}  \right)
&\text{Puncture-Skein Relation on $i$th puncture} \\
&3)
\quad 
\begin{minipage}{.5in}\includegraphics[width=\textwidth]{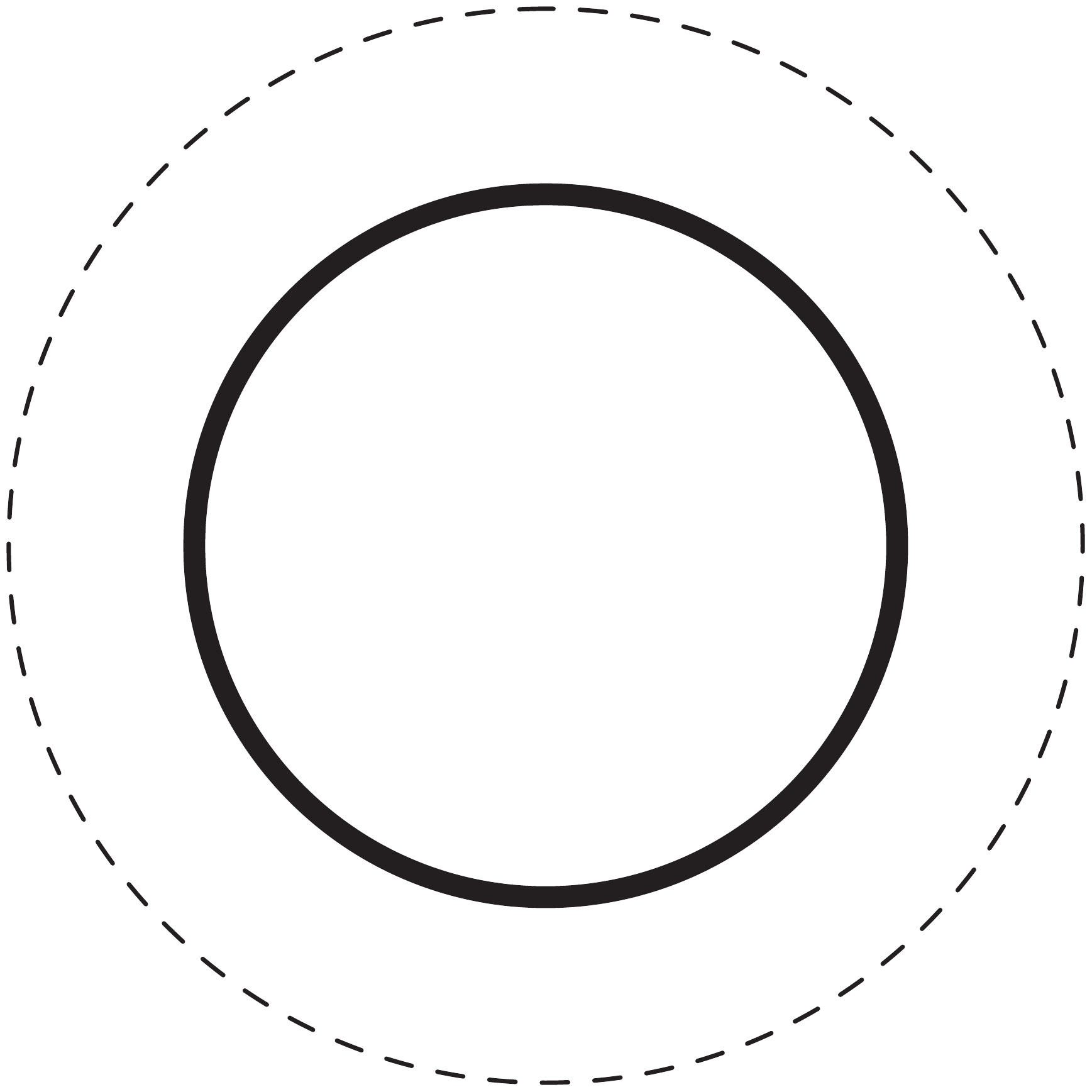} \end{minipage} 
- ( - A^2 - A^{-2} )
&\text{Framing Relation} \\
&4)
\quad 
\begin{minipage}{.5in}\includegraphics[width=\textwidth]{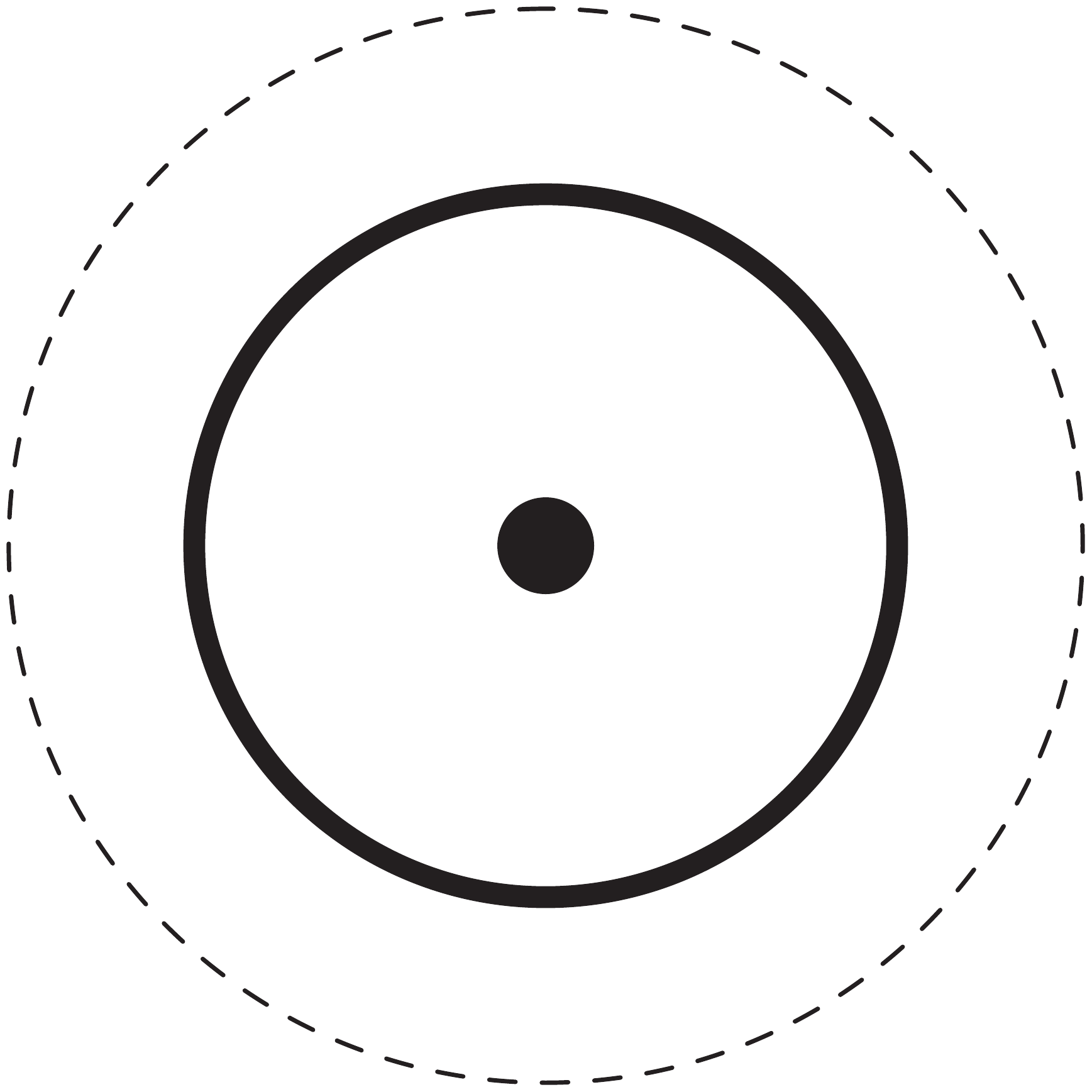} \end{minipage} 
-( A + A^{-1} )
&\text{Puncture-Framing Relation} 
\end{align*}
where the diagrams in each form are assumed to be identical outside of the small balls depicted. 
Let  $\mathcal A(F_{g,n})$ denote the quotient $\mathcal G(F_{g,n}) / \mathcal K(F_{g,n})$.

There is a natural stacking operation for framed curves in the thickened surface $F_{g,n} \times [0,1]$ which extends to $\mathcal A(F_{g,n})$.  That is, if $[L_1], [L_2] \in \mathcal A(F_{g,n})$ are respectively represented by framed curves $L_1, L_2$ in $F_{g,n} \times[0, 1]$, the product $[L_1]\ast[L_2] = [L_1' \cup L_2'] \in \mathcal A(F_{g,n})$ is represented by the union of the framed curve $L_1' \subset F_{g,n} \times [0, \frac12]$ (obtained by rescaling $L_1$ in $ F_{g,n} \times [0, 1]$) and of  the framed curve $L_2' \subset F_{g,n} \times [\frac12, 1]$ (obtained by rescaling $L_2$ in $F_{g,n} \times [0, 1]$).    This stacking operation makes $\mathcal A(F_{g,n})$ into an algebra, called the \emph{Kauffman bracket arc algebra} of the surface $F_{g,n}$. 

Diagrams in this paper represent projections of framed curves onto $F_{g,n}$ with over- and under-crossing information depicted by breaks in the projection at double-points in the projection, and where the framing of curves is vertical, at right angles to the plane of the paper.     Although more than one component of a framed curve can end at any puncture, they must do so at different heights.   Diagrams will indicate the order in height of the crossings as necessary.   
\begin{figure}
\includegraphics[width=.20\textwidth]{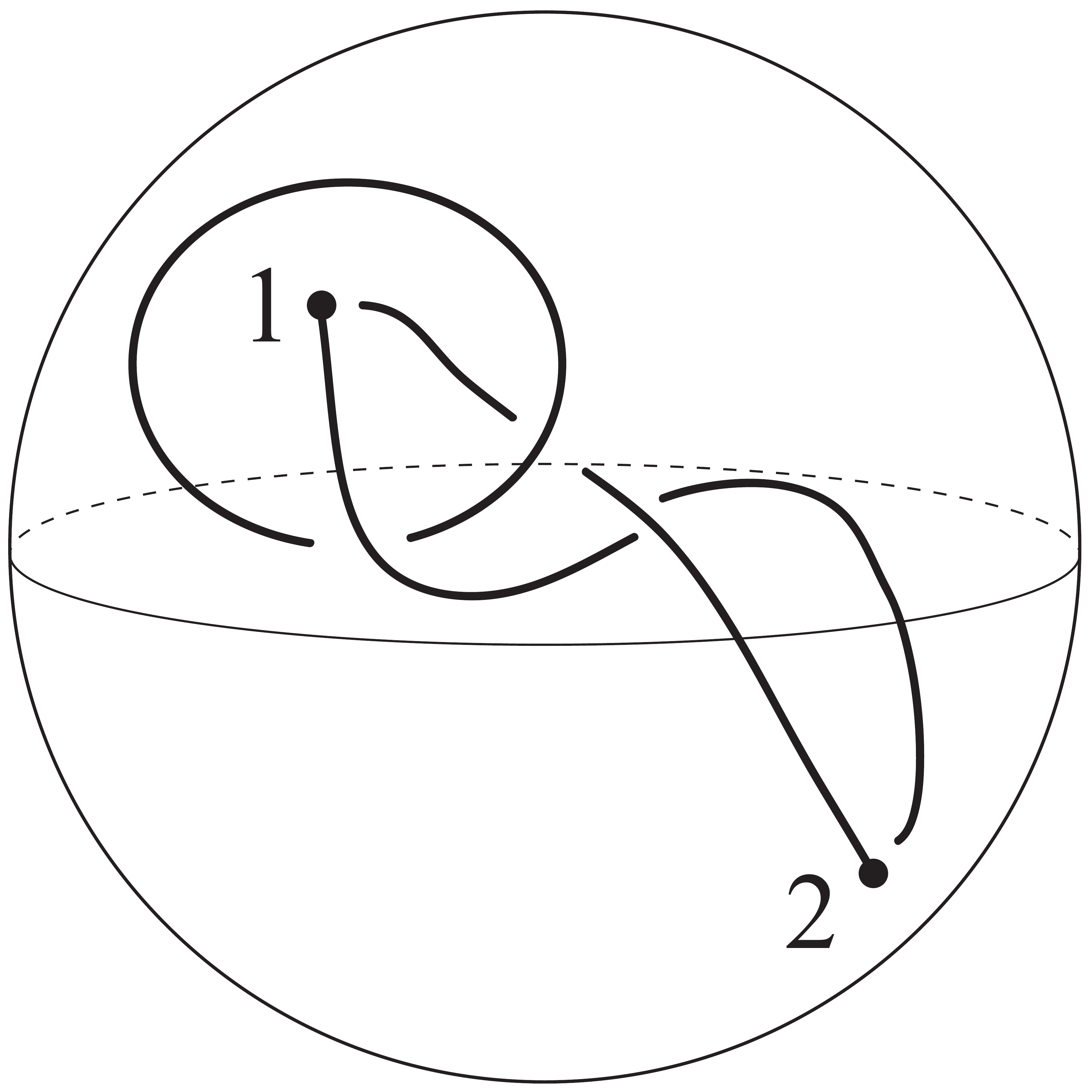}
\caption{A framed curve with three components on $F_{0, 2}$.}
\label{fig:arcex}
\end{figure}
Figure~\ref{fig:arcex} shows a framed curve consisting of three components (two framed arcs and a framed knot) in a sphere with two punctures.  No further labeling at punctures is necessary in Figure~\ref{fig:arcex} since arcs intersect each puncture only twice.  

Figure~\ref{fig:relnex} shows a product of two framed curves on a twice-punctured torus.  The product can be simplified by using a Reidemeister 2 move followed by Relation~2 (Puncture-Skein Relation)  to ``pull off" a pair of strands that meet at a puncture and  Relation~4 (Puncture Relation) to ``remove" trivial components enclosing a puncture.  
\begin{figure}[htpb]
\begin{eqnarray*}
\pic{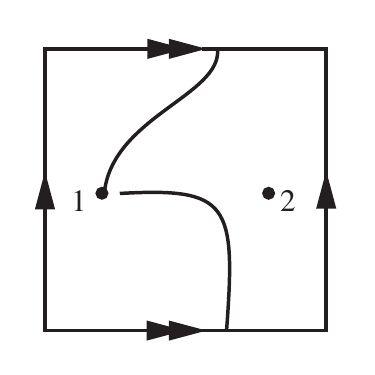} \!\! \!\! \ast \!\!\!\! \pic{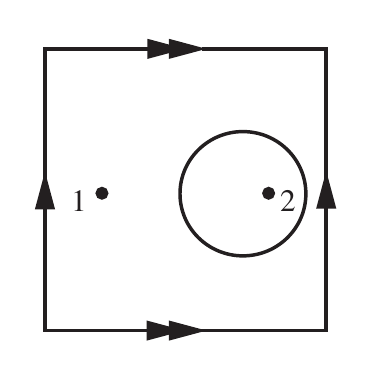} 
&=&\pic{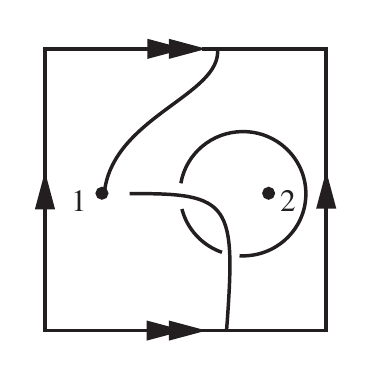} 
=\pic{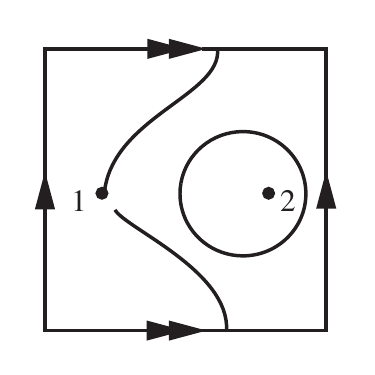} = \\
&=&  v_1^{-1} \left( A^\frac{1}{2}\pic{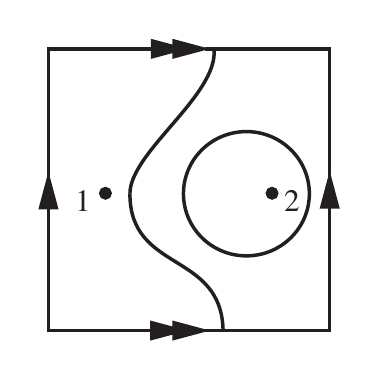} + A^{-\frac{1}{2}}\pic{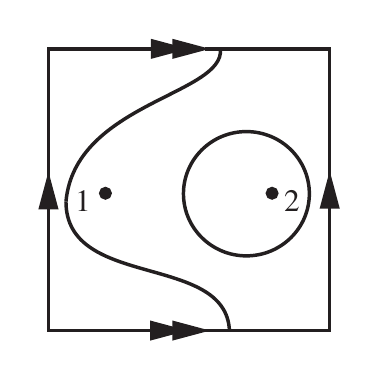} \right)\\
&=& v_1^{-1} (A+A^{-1}) \left( A^\frac{1}{2} \pic{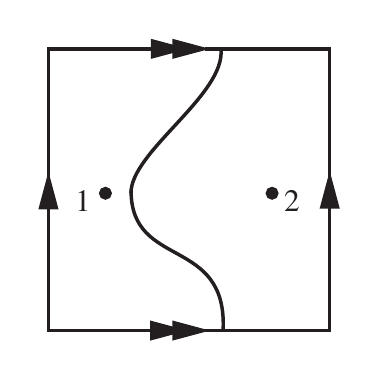} +  A^{-\frac{1}{2}} \pic{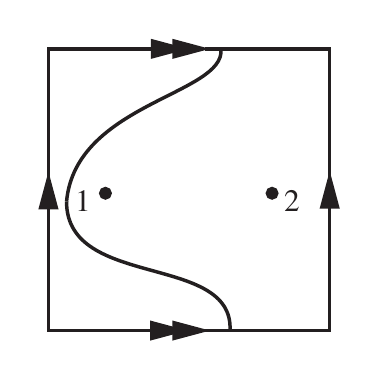} \right)
\end{eqnarray*}
\caption{Rewriting a framed curve in $\mathcal A(F_{1, 2})$.}
\label{fig:relnex}
\end{figure}

The Kauffman bracket skein algebra $\mathcal S(F_{g,n})$ defined by Turaev and Przytycki (\cite{Tu88, Pr91}) is closely related to the  arc algebra $\mathcal A(F_{g,n})$.  Recall that the  skein algebra $\mathcal S(F_{g,n})$ can be constructed by considering the quotient $\mathcal G_0(F_{g,n}) / \mathcal K_0(F_{g,n})$, where $\mathcal G_0(F_{g,n})$ is the $\Z[A, A^{-1}]$-module generated by the framed links in $F_{g,n} \times [0,1]$ and $\mathcal K_0(F_{g,n})$ is the submodule generated by only Relation 1 (the skein relation) and Relation 3 (the framing relation) from above.  Again, multiplication is induced by the stacking of framed links in the thickened surface.   Compared with the skein algebra $\mathcal S(F_{g,n})$, the definitions of the arc algebra $\mathcal A(F_{g,n})$ differs in two ways: in the choice of a larger ring and in the inclusion of two extra relations.    

\begin{lem} \label{lem:psi}
There exists a well-defined algebra homomorphism $\psi: \mathcal S(F_{g,n}) \to \mathcal A(F_{g,n})$.
\end{lem}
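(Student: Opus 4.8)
The plan is to build $\psi$ first at the level of the free modules and then check that it descends to the quotients and respects multiplication. Every framed link in $F_{g,n} \times [0,1]$ is in particular a framed curve, namely one with no arc components, so there is an obvious assignment on generators sending a framed link $L$ to the class $[L] \in \mathcal A(F_{g,n})$ of the same geometric object now regarded as a framed curve. The source $\mathcal S(F_{g,n})$ is a $\Z[A, A^{-1}]$-algebra while the target is an $R_n$-algebra, so I would use the ring inclusion $\Z[A, A^{-1}] \hookrightarrow R_n$ sending $A \mapsto A = (A^{\frac12})^2$ to extend the assignment on generators $\Z[A, A^{-1}]$-linearly to a module homomorphism $\tilde\psi : \mathcal G_0(F_{g,n}) \to \mathcal A(F_{g,n})$. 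Because an isotopy of framed links stays a link and agrees with the isotopy of the corresponding framed curves, $\tilde\psi$ is already well defined on $\mathcal G_0(F_{g,n})$.

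The key step is to show that $\tilde\psi$ annihilates $\mathcal K_0(F_{g,n})$, so that it factors through $\mathcal S(F_{g,n}) = \mathcal G_0(F_{g,n}) / \mathcal K_0(F_{g,n})$. By definition $\mathcal K_0(F_{g,n})$ is generated by the Skein Relation (Relation 1) and the Framing Relation (Relation 3) applied to link diagrams. But each such generator is literally one of the generators of $\mathcal K(F_{g,n})$: a link diagram is a special case of a curve diagram, and Relations 1 and 3 are two of the four defining relations of $\mathcal A(F_{g,n})$, with the same coefficients $A, A^{-1}$ and $-A^2 - A^{-2}$ lying in the subring $\Z[A, A^{-1}] \subset R_n$. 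Hence $\tilde\psi$ carries each generator of $\mathcal K_0(F_{g,n})$ into $\mathcal K(F_{g,n})$, where it becomes zero, and $\psi : \mathcal S(F_{g,n}) \to \mathcal A(F_{g,n})$ is well defined.

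Finally I would verify that $\psi$ is a homomorphism of algebras. Both products are induced by vertical stacking in the thickened surface, and the stack of two framed links is the same framed curve whether it is formed in $\mathcal G_0(F_{g,n})$ or in $\mathcal G(F_{g,n})$; thus $\psi([L_1][L_2]) = \psi([L_1]) \ast \psi([L_2])$, and the empty link, the unit of $\mathcal S(F_{g,n})$, maps to the empty curve, the unit of $\mathcal A(F_{g,n})$. I do not expect a serious obstacle: the entire content of the lemma is the observation that the two defining relations of $\mathcal S(F_{g,n})$ form a subset of the four defining relations of $\mathcal A(F_{g,n})$, together with the fact that the coefficient ring of $\mathcal S(F_{g,n})$ embeds in that of $\mathcal A(F_{g,n})$. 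The only point demanding a little care is the bookkeeping of the base-ring change, so that $\psi$ is genuinely a homomorphism of $\Z[A, A^{-1}]$-algebras rather than merely an additive map.
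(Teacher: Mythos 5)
Your proposal is correct and follows essentially the same route as the paper: both define the natural map by extending scalars from $\Z[A,A^{-1}]$ to $R_n$ and regarding framed links as framed curves, and both rest on the key observation that the generators of $\mathcal K_0(F_{g,n})$ (Relations 1 and 3) are among the generators of $\mathcal K(F_{g,n})$, so the map descends to the quotients. Your explicit check of multiplicativity and the unit is a point the paper leaves implicit, but it is the same argument.
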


\begin{proof}
Consider the map $i: \mathcal G_0(F_{g,n}) \to R_n \otimes \mathcal G_0(F_{g,n})$ with $i(x) = 1 \otimes x$ that changes the scalars, and the map $j:  R_n \otimes \mathcal G_0(F_{g,n})  \to \mathcal G(F_{g,n})$ with $j(p \otimes x)=  p \cdot x$ that includes the framed links into the framed curves.   Let $\hat \psi = j \circ i$.  Notice that $\hat \psi (\mathcal K_0(F_{g,n}))  \subseteq  \mathcal K(F_{g,n})$.  Thus $\hat \psi: \mathcal G_0(F_{g,n})  \to \mathcal G(F_{g,n}) $ descends to a map $\psi:  \mathcal G_0(F_{g,n})/ \mathcal K_0(F_{g,n}) \to \mathcal G(F_{g,n})/\mathcal K (F_{g,n})$. 
\end{proof}

We are interested in the image of $\psi$.  In certain small cases, it generates $\mathcal A(F_{g,n})$, a fact we will exploit later in Section \ref{sec:n01}.

\section{Generators and Relations for the Arc Algebra}

A general strategy for finding generating sets for $\mathcal A(F_{g,n})$ is to rewrite framed curves using ones with fewer crossings.  We say that a framed knot  is a \emph{simple knot} if it allows a projection onto $F_{g,n} \times \{0\}$ without any crossings and it does not bound a disk containing one or no punctures.    A framed arc  is a \emph{simple arc} if its endpoints are at distinct punctures and it allows a projection without any crossings.  A \emph{simple curve} is either a simple knot or a simple arc.   

\begin{lem} \label{lem:simple}
If a set of elements $\{x_1, x_2, \dots x_m\}$ generates the simple curves then it generates all of $\mathcal A(F_{g,n})$.
\end{lem}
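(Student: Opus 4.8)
The plan is to show that every framed curve, regarded as an element of $\mathcal A(F_{g,n})$, lies in the subalgebra generated by the simple curves; since $\mathcal A(F_{g,n})$ is generated as an $R_n$-module by the isotopy classes of framed curves, this suffices. As the hypothesis says that $\{x_1,\dots,x_m\}$ generates every simple curve, it will then follow that $\{x_1,\dots,x_m\}$ generates all of $\mathcal A(F_{g,n})$. I would carry out the reduction by induction on a complexity measure attached to a generic projection of a framed curve onto $F_{g,n}$, in the spirit of the argument used for the skein algebra in \cite{BuPr00}.

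First I would fix a generic projection and induct on the number $c$ of crossings. When $c>0$, I pick a crossing away from the punctures and apply the Skein Relation (Relation 1); each of the two resulting diagrams has $c-1$ crossings, so by induction both lie in the subalgebra generated by simple curves, and hence so does the original curve. This reduces the problem to \emph{crossingless} curves, whose components are disjoint embedded loops and arcs that meet only at punctures.

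Next I would treat a crossingless curve. Because its components have disjoint projections, I can isotope them into disjoint height intervals of $F_{g,n}\times[0,1]$ without creating crossings, so the curve equals the $\ast$-product of its individual components; it therefore suffices to reduce a single knot or a single arc. A single knot is either a simple knot (a generator), a trivial loop (removed by the Framing Relation, Relation 3, at the cost of the scalar $-A^2-A^{-2}$), or a loop bounding a once-punctured disk (removed by the Puncture-Framing Relation, Relation 4, at the cost of $A+A^{-1}$). A single arc with endpoints at distinct punctures is a simple arc, again a generator. The one remaining case is an arc with both endpoints at the same puncture $p_i$; here I would apply the Puncture-Skein Relation (Relation 2) at $p_i$, which rewrites $v_i$ times this arc as $A^{\frac12}$ and $A^{-\frac12}$ times two diagrams in which the two strands have been joined near $p_i$, producing crossingless loops with strictly fewer arc-endpoints at the punctures.

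The main obstacle is to organize these moves so that the process terminates, that is, to choose a complexity that strictly decreases under every relation. I would use the lexicographic pair given by the number of crossings and then the number of arc-endpoints at punctures: the Skein Relation decreases the first coordinate, while the Puncture-Skein Relation is applied only to crossingless diagrams, keeps the first coordinate zero, and strictly decreases the second, and the two Framing Relations merely delete components. Checking that the local smoothings of Relations 2 and 4 neither introduce crossings nor raise the endpoint count — so that this measure is genuinely well-founded — is the technical heart of the argument; granting it, every framed curve reduces to an $R_n$-linear combination of $\ast$-products of simple curves, which completes the proof.
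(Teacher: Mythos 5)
Your reduction has a genuine gap at the step where you pass from a crossingless curve to the $\ast$-product of its components. In Roger and Yang's setting, the strands of a framed curve ending at a common puncture do so at distinct heights, and an isotopy cannot make two such endpoints pass through one another; the height ordering at each puncture is therefore an isotopy invariant. Consequently a crossingless curve whose components share punctures need not be isotopic to any stacking of its components. For instance, take two disjointly projected arcs $a_1, a_2$, both running from $p_1$ to $p_2$, with the endpoint of $a_1$ above that of $a_2$ at $p_1$ but below it at $p_2$: this curve is not isotopic to $a_1$ stacked over $a_2$, nor to $a_2$ stacked over $a_1$, so it is not the product of its components. Your case analysis treats components one at a time and only ever applies the Puncture-Skein Relation to a single arc with both endpoints at one puncture, so curves of this kind are never reduced. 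The missing ingredient is the application of Relation 2 to pairs of endpoints belonging to \emph{different} components meeting at a common puncture.

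This is exactly what the paper's proof does: it applies the skein relation \emph{and} the puncture-skein relation to write any curve as a linear combination of crossingless curves that meet each puncture \emph{at most once}; only for such curves can the components be isotoped to disjoint height levels, exhibiting the curve as a product of simple knots, simple arcs, and loops bounding disks with one or no punctures (the latter removed by Relations 3 and 4). Note that repairing your argument along these lines also forces you to reverse your lexicographic measure: resolving a pair of endpoints at a puncture can create new ordinary crossings between the joining arcs and other strands ending at that same puncture, so the endpoint count must be the dominant coordinate and the crossing number the subordinate one --- i.e., the well-founded measure is (endpoints at punctures, crossings), not (crossings, endpoints). With those two changes the induction terminates and your component-by-component case analysis, which is otherwise complete, goes through.
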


\begin{proof}
Suppose we have a basis element $[L] \in \mathcal A(F_{g,n})$ represented by a framed curve $L\subset F_{g,n} \times[0, 1]$.
By application of the skein relation and the puncture-skein relation, $[L]$ may be written as a linear combination of skeins represented by framed curves each of which has no crossings and which intersects a puncture at most once.   In particular, the connected components of each framed curve can be isotoped to be at different heights, so $[L]$ is a linear combination of products of simple knots, simple arcs, and possibly some loops that bound disks containing one or no punctures.  Those latter loops may be removed by application of the framing and puncture relations.    Thus $[L]$ is a linear combination of simple curves.  Since $\{x_1, x_2, \dots x_m\}$ generate the simple curves,  $[L]$ is also in the set generated by $\{x_1, x_2, \dots x_m\}$.
\end{proof}

\subsection{Remark} Observe that if $(A^2 -1)$ is invertible, then the puncture-skein relation implies that:
\begin{align*}
\begin{minipage}{.5in}\includegraphics[width=\textwidth]{rel-punctureskein3.pdf}\end{minipage}  
&= v_i A^\frac{1}{2} (A^2 -1)^{-1}
 \left(  \begin{minipage}{.5in}\includegraphics[width=\textwidth]{rel-punctureskein1.pdf}\end{minipage} 
-  A \begin{minipage}{.5in}\includegraphics[width=\textwidth]{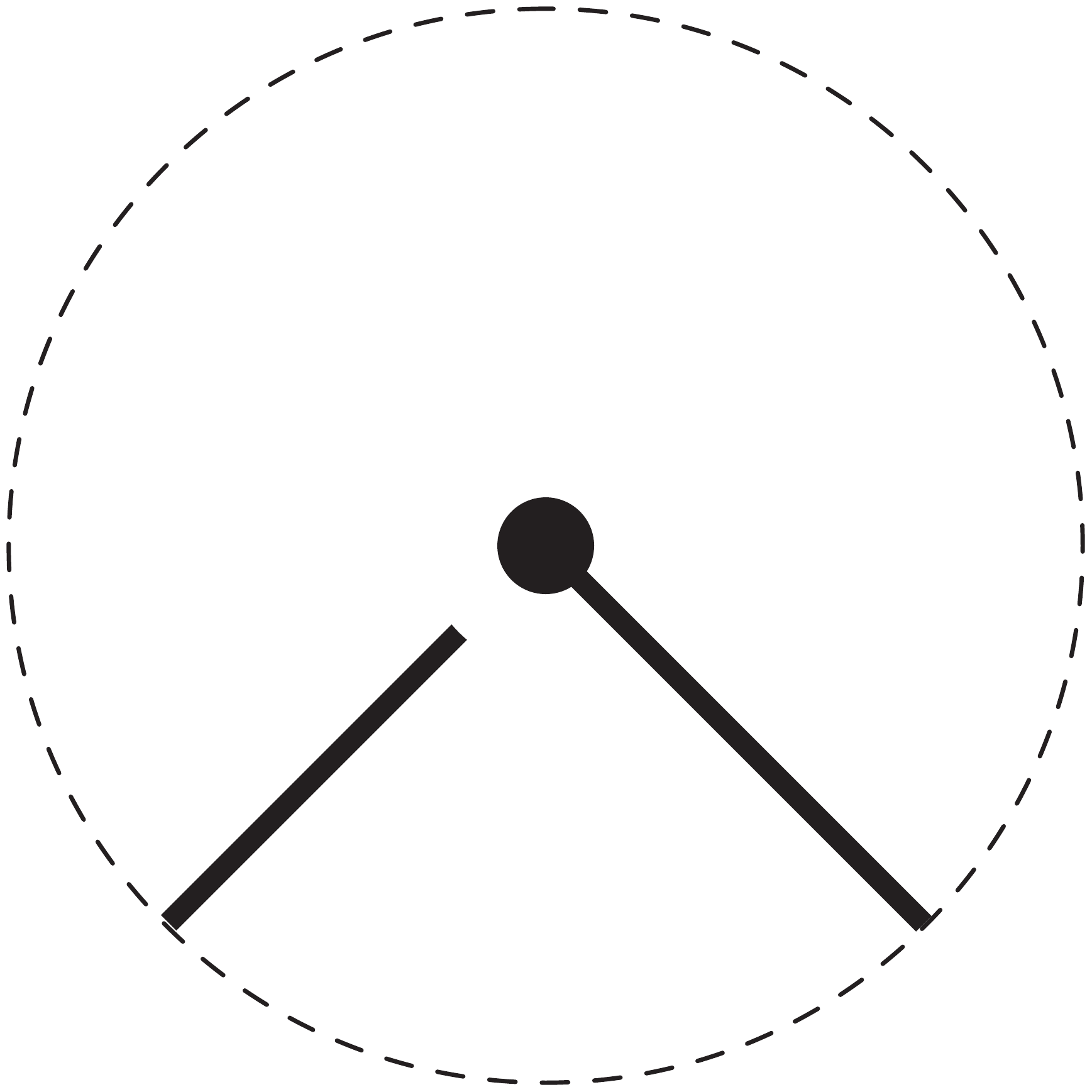}\end{minipage} 
 \right) 
  \\
 \mbox{ and  \; }
\begin{minipage}{.5in}\includegraphics[width=\textwidth]{rel-punctureskein2.pdf}\end{minipage}  
&= v_i A^{-\frac{1}{2}} (A^{-2} -1)^{-1}
 \left(  \begin{minipage}{.5in}\includegraphics[width=\textwidth]{rel-punctureskein1.pdf}\end{minipage} 
-  A^{-1}\begin{minipage}{.5in}\includegraphics[width=\textwidth]{rel-punctureskein1a.pdf}\end{minipage} 
 \right).
\end{align*}
So when $A^2 -1$ is invertible, if a set of elements generates  only the simple arcs, then it generates all of $\mathcal A(F_{g,n})$ by Lemma 3.1.  
However, in the following examples, we will work under the most general set-up, and we will \emph{not} assume that $A^2 -1$ is invertible.

\subsection{Arc Algebra of Punctured Spheres}

We begin by a refinement of Lemma~\ref{lem:simple} in the case of punctured spheres, $F_{0, n}$ with $n \geq 2$.  

\begin{prop} \label{lem:arcs}
If a set of elements $\{ x_1, x_2, \ldots, x_n \}$ of the arc algebra $\mathcal A(F_{0,n})$ generate the simple arcs, then it generates the entire algebra. 
\end{prop}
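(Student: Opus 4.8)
The plan is to show that the subalgebra generated by the simple arcs already contains every simple knot; once this is established, the given set generates all simple curves, and Lemma~\ref{lem:simple} finishes the argument. On the sphere $F_{0,n}$ every simple closed curve separates the punctures into two sets, each of which must contain at least two punctures for the curve to be a simple knot, and conversely the isotopy class of a simple knot is completely determined by this partition. I would therefore attach to each simple knot $\gamma$ the complexity $k = \min(|S|, |S^c|)$, where $S \sqcup S^c$ is the induced partition, so that $2 \le k \le \lfloor n/2 \rfloor$, and induct on $k$. (When $n \le 3$ this range is empty, so there is nothing to prove and the simple arcs are already all the simple curves.)

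For the base case $k = 2$, suppose $\gamma$ encloses exactly the two punctures $p_i, p_j$. Then $\gamma$ is isotopic to the boundary of a regular neighborhood of a simple arc $\alpha$ from $p_i$ to $p_j$, so $\gamma$ appears when we form the product $\alpha \ast \alpha$ of $\alpha$ with a parallel copy of itself and apply the puncture-skein relation at $p_i$ and at $p_j$. Each application contributes a factor $v_\bullet^{-1}$ together with a choice of $A^{\pm\frac12}$, and exactly one of the four resulting resolutions reproduces $\gamma$, with coefficient a unit $v_i^{-1} v_j^{-1} A^{\pm\frac12\pm\frac12}$ of $R_n$. The three remaining resolutions enclose a single puncture or no puncture, so by the puncture-framing relation and the framing relation they reduce to scalars. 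Solving for $\gamma$ expresses it in terms of $\alpha$ and scalars, hence places $\gamma$ in the subalgebra generated by the simple arcs.

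For the inductive step, let $\gamma$ have complexity $k \ge 3$ and enclose the smaller set $S$. I would realize $\gamma$ as the boundary of a regular neighborhood of a spanning tree of the punctures in $S$ inside a disk meeting no other puncture, and express this boundary as a product of parallel copies of the simple arcs forming the tree, exactly as in the base case. Applying the puncture-skein relation at each puncture of $S$ resolves the product into a sum over the possible local reconnections; one distinguished choice returns $\gamma$ with a unit coefficient, while every other choice yields a product of simple closed curves each enclosing a \emph{proper} subset of $S$, together with trivial or single-puncture loops that collapse to scalars by the framing and puncture-framing relations. Since a curve enclosing a proper subset of $S$ has complexity strictly less than $k$ (here I use $k \le \lfloor n/2 \rfloor$, so the complement of the smaller subset remains the larger side), each such term lies in the subalgebra generated by the simple arcs by the inductive hypothesis, and solving for $\gamma$ completes the induction.

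The main obstacle is the topological bookkeeping hidden in the phrase ``every other resolution encloses a proper subset of $S$'': one must check that the puncture-skein reconnections of the doubled spanning tree genuinely drop the complexity, so that the induction is well-founded, while simultaneously verifying that the coefficient of the distinguished term reproducing $\gamma$ is always a \emph{unit} of $R_n$. The latter point is what lets the argument run without inverting $A^2 - 1$, in contrast to the Remark following Lemma~\ref{lem:simple}: we only ever use the puncture-skein relation in the direction whose coefficients are the units $v_i^{\pm 1}$ and $A^{\pm\frac12}$, never solving for a strand that passes a puncture.
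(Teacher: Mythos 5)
Your proposal is correct in outline and its base case is essentially the paper's own computation, but your inductive step takes a genuinely different, and much heavier, route. Both arguments reduce to showing every simple knot lies in the subalgebra generated by the simple arcs (via Lemma~\ref{lem:simple}), and both induct on the number of punctures enclosed, solving for the knot using the fact that puncture-skein coefficients $v_i^{-1}A^{\pm\frac12}$ are units; the difference is which product of arcs gets resolved. You double an entire spanning tree of the $k$ punctures and resolve at all $k$ of them, which produces exponentially many terms and forces the two claims you flag as ``the main obstacle'': that exactly one reconnection reproduces $\gamma$ (so its coefficient is a single unit monomial rather than a sum of units), and that every other term drops complexity. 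These claims are in fact true --- for instance, take the spanning tree to be an embedded path and use the nested pairing of strands at each puncture, so that every term is a single closed curve, and a term encloses all of $S$ only when every local reconnection is the ``enclosing'' one --- but verifying them is real work, and note that at a tree vertex of degree $d$ there are $2d$ strands, so the puncture-skein relation (which applies to two height-consecutive strands) must be iterated $d$ times with some care about which pairings are achievable. The paper sidesteps all of this with a leaner device: it joins just two of the enclosed punctures, $p_1$ and $p_k$, by two disjoint simple arcs $\alpha$ and $\beta$ whose union encircles the remaining punctures $p_2,\dots,p_{k-1}$, and resolves $\alpha\ast\beta$ at only those two punctures. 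This yields exactly four terms: $K$ itself with unit coefficient $v_1^{-1}v_k^{-1}$, and three knots bounding disks with $k-1$, $k-1$, and $k-2$ punctures, so uniqueness of the distinguished term, unit coefficients, and well-foundedness of the induction are all visible at a glance. Your argument can be completed, but the paper's two-arc decomposition is precisely what makes your ``main obstacle'' disappear; if you revise, replace the doubled spanning tree with these two arcs.
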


\begin{proof}
By Lemma \ref{lem:simple}, it suffices to show that any simple knot can be rewritten in terms of simple arcs.  
Given a simple knot $K \subseteq F_{0, n} \times[0,1]$, notice that it has a projection which separates $F_{0, n}$ into two punctured disks.  Let $D$ be the punctured disk bounded by $K$ with the smaller number of punctures, say $p_1, \ldots, p_k$.   

Since $K$ is a simple knot, $k \geq 2$.  There exist two disjoint simple arcs from $p_1$ to $p_k$ such that the union of their projections onto $F_{0, n}$ encloses the remaining punctures $p_2, \ldots, p_{k-1}$.   Let $\alpha$ and $\beta$ be the skeins represented by these two arcs, respectively.   See Figure \ref{fig:lemarcs}.   
\begin{figure}
\includegraphics[width=.20\textwidth]{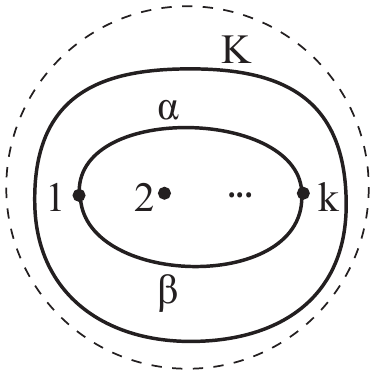}
\caption{A neighborhood of $K$ in $F_{0, n}$.}
\label{fig:lemarcs}
\end{figure}

Consider the product $\alpha \ast \beta \in \mathcal A( F_{0,n} )$ and apply the puncture-skein relation twice:
\begin{eqnarray*}
\alpha \ast \beta  &=& \pic{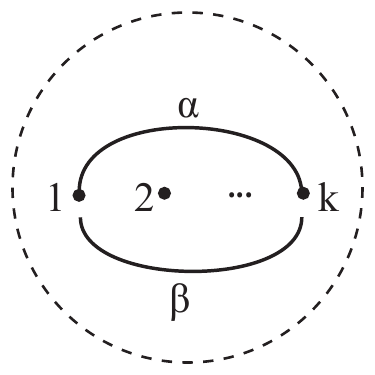} \\
 &=& \frac{1}{v_1v_k}\left(A\pic{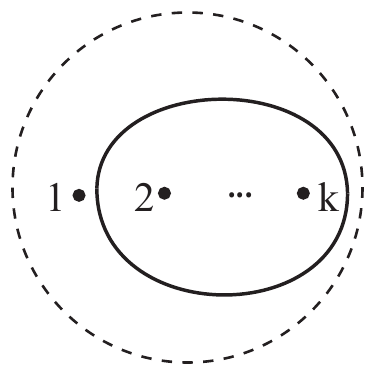} + \pic{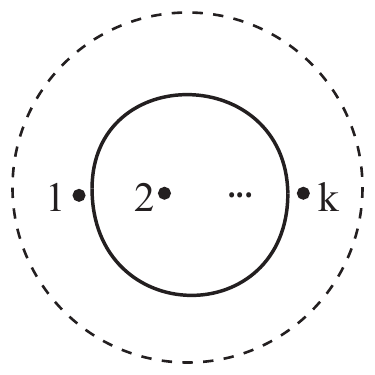} + \pic{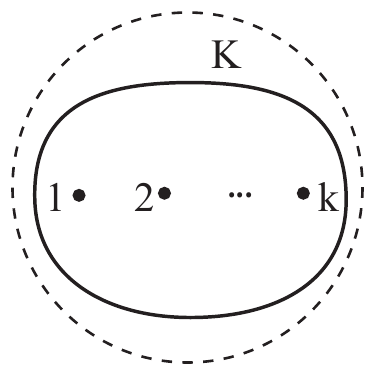} + A^{-1}\pic{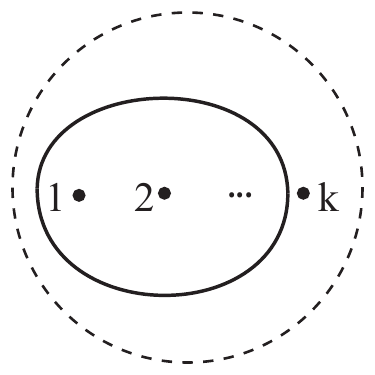}\right).
\end{eqnarray*}
Thus $[K]$ can be rewritten as a linear combination involving the product of two simple arcs ($\alpha$ and $\beta$) and three knots bounding disks with strictly fewer punctures.   Notice also that a knot bounding a disk with one or no punctures can be removed using the puncture relation or the framing relation, respectively.  Thus by induction, we are done.  
\end{proof}

\subsubsection{Sphere with 2 Punctures}


\begin{thm} \label{thm:a02}
$\mathcal A(F_{0,2}) = R_2\langle \alpha \; | \;  \alpha^2= -v_1^{-1} v_2^{-1}   (A-A^{-1})^2 \rangle  $ where $\alpha$ is represented by a simple arc between the two punctures of $F_{0, 2}$.
\end{thm}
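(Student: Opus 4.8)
The plan is to establish the theorem in two directions: first show that $\mathcal A(F_{0,2})$ is generated as an $R_2$-algebra by the single simple arc $\alpha$, and second pin down the single relation by computing $\alpha^2$ explicitly. The generation statement is essentially immediate from the results already proved: by Proposition~\ref{lem:arcs}, it suffices to check that $\alpha$ generates all the simple arcs on $F_{0,2}$. On a twice-punctured sphere, any simple arc has its endpoints at the two distinct punctures $p_1$ and $p_2$, and since $F_{0,2}$ is an annulus (a sphere minus two points), there is up to isotopy only one such arc. Hence $\alpha$ is the \emph{only} simple arc, and the generation hypothesis of Proposition~\ref{lem:arcs} is satisfied trivially. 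I would state this carefully, perhaps noting that any purported second simple arc would be isotopic to $\alpha$ in the annulus.

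Next I would compute the product $\alpha \ast \alpha$. The idea is to stack two copies of the simple arc, one at height $[0,\tfrac12]$ and one at height $[\tfrac12,1]$, both running from $p_1$ to $p_2$. Because the two arcs share both endpoints, near each puncture two strands come in at different heights, so I can apply the Puncture-Skein Relation (Relation~2) at each of $p_1$ and $p_2$ to resolve these endpoint meetings. Applying the puncture-skein relation at $p_1$ rewrites the meeting there as $v_1^{-1}(A^{1/2}(\cdots) + A^{-1/2}(\cdots))$, and likewise at $p_2$ with $v_2^{-1}$; the two resolutions produce a sum of four terms, each a framed link (now with no arc endpoints) built from the smoothed diagrams. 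I expect that after these resolutions and a Reidemeister~2 simplification, the resulting closed curves each either bound a disk enclosing a single puncture — removable by the Puncture-Framing Relation (Relation~4), contributing a factor of $(A+A^{-1})$ — or bound an empty disk, removable by the Framing Relation (Relation~3), contributing $(-A^2-A^{-2})$, or are nullhomotopic and contribute trivially. Tracking the signs and the powers of $A^{\pm1/2}$ across the four terms, together with the $v_1^{-1}v_2^{-1}$ prefactor, should collapse to $-v_1^{-1}v_2^{-1}(A-A^{-1})^2$.

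The main obstacle, and the step demanding the most care, is the bookkeeping in this computation: correctly identifying which of the four smoothings yield empty-disk loops versus single-puncture loops, getting the crossing/height conventions right so the puncture-skein coefficients combine properly, and verifying that the $A^{\pm1/2}$ contributions assemble into the claimed $(A-A^{-1})^2$ with the overall minus sign. This is exactly the type of diagrammatic calculation illustrated in Figure~\ref{fig:relnex}, so I would model the argument on that worked example. Finally, I would argue that this relation is the \emph{only} relation: since $\alpha$ generates the algebra, $\mathcal A(F_{0,2})$ is a quotient of the free $R_2$-algebra $R_2\langle\alpha\rangle$, and the relation $\alpha^2 = -v_1^{-1}v_2^{-1}(A-A^{-1})^2$ shows that $\{1,\alpha\}$ spans the algebra over $R_2$, so the quotient by this single relation already gives a free $R_2$-module of rank at most two. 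To conclude the presentation is exactly as stated rather than a further quotient, I would invoke linear independence of $1$ and $\alpha$ — which follows from the fact that $\alpha$ is not isotopic to any scalar multiple of the empty diagram — so no additional relations can hold, completing the proof.
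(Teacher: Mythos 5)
Your overall strategy is the same as the paper's: generation follows from Proposition~\ref{lem:arcs} together with the observation that $F_{0,2}$ carries exactly one simple arc up to isotopy, and the relation comes from resolving $\alpha\ast\alpha$ at the two punctures. But there are two genuine gaps. First, the computation of $\alpha^2$, which you yourself identify as the crux, is never actually carried out --- it is only predicted to ``collapse'' to the stated value --- and your description of the possible outcomes contains a spurious category: no smoothing ``contributes trivially.'' Concretely, the puncture-skein resolutions at $p_1$ and $p_2$ produce four terms with coefficients $A$, $1$, $1$, $A^{-1}$; the $A$- and $A^{-1}$-terms are each a single loop bounding a disk with one puncture, worth $A+A^{-1}$ by the puncture-framing relation, while the two coefficient-$1$ terms are each a contractible loop, worth $-A^2-A^{-2}$ by the framing relation. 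This gives
\[
\alpha^2 = v_1^{-1}v_2^{-1}\bigl( A(A+A^{-1}) + 2(-A^2-A^{-2}) + A^{-1}(A+A^{-1}) \bigr) = -v_1^{-1}v_2^{-1}(A-A^{-1})^2,
\]
which is exactly the computation the paper performs.

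Second, and more seriously, your justification of the linear independence of $1$ and $\alpha$ is invalid. That $\alpha$ is not isotopic to a scalar multiple of the empty curve shows only that $1$ and $\alpha$ are independent in the free module $\mathcal G(F_{0,2})$; it says nothing about the quotient $\mathcal A(F_{0,2}) = \mathcal G(F_{0,2})/\mathcal K(F_{0,2})$, where the defining relations do create linear dependencies among distinct isotopy classes --- the puncture-skein relation is itself such a dependency, and your own computation exhibits the class of the two-component curve $\alpha\ast\alpha$ as a scalar. To close this step one needs an honest argument, for instance the device the paper uses for $F_{0,3}$: exhibit an algebra homomorphism from the algebra to a matrix algebra (a left regular representation) under which the images of $1$ and $\alpha$ are visibly independent, and invoke Lemma~\ref{lem:linind}. (The paper's own proof is admittedly terse here, asserting only that since $\alpha$ is the sole generator this is the sole relation, but it does not offer the isotopy argument, which would be incorrect as stated.)
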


\begin{proof}
On $F_{0, 2}$, any simple arc must start at one puncture and end at the other without intersecting itself.   Up to isotopy, there is only one such arc, and let $\alpha$ be the skein represented by that arc. By Proposition~\ref{lem:arcs}, $\alpha$ generates the algebra.
Note that in the  arc algebra, 
\begin{align*}
\alpha^2 &= \pic{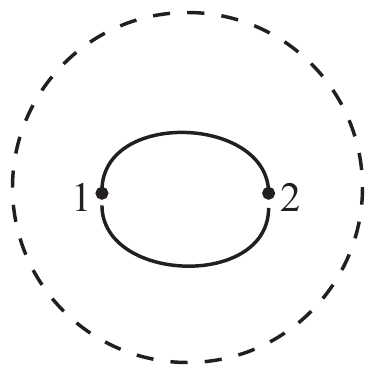} = \frac{1}{v_1 v_2} \left(A\pic{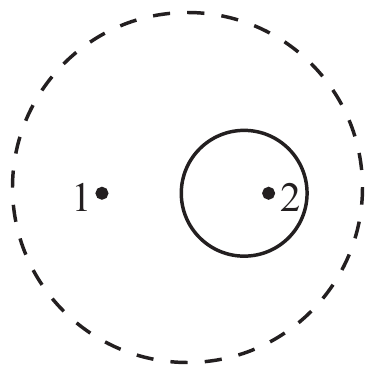} + 2\pic{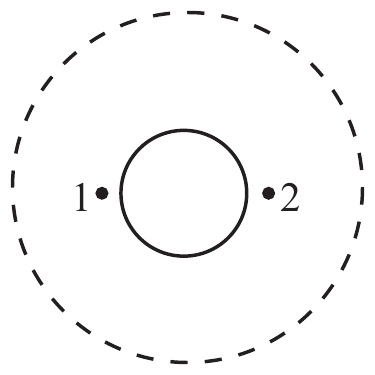} + A^{-1}\pic{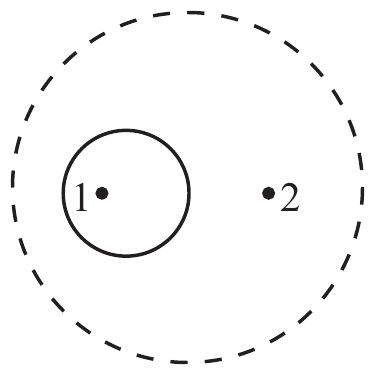}\right) \\
&= v_1^{-1} v_2^{-1} ((A+A^{-1})(A+A^{-1}) + 2(-A^2-A^{-2})) \\
&= -v_1^{-1} v_2^{-1} (A - A^{-1})^2.
\end{align*}
In particular, this shows that $\alpha^2$ is not linearly independent from $1$ and $\alpha$.  As $\alpha$ is the only generator, this is the only relation of $\mathcal A(F_{0,2})$.
\end{proof}

\subsubsection{Sphere with 3 Punctures}
To determine the  arc algebra $\mathcal A(F_{0, 3})$, we will need the following lemma from algebra.  
\begin{lem} \label{lem:linind}
Let $A$ and $B$ be $R$-algebras.
Suppose $x_1, x_2, \dots, x_n$ are elements of the algebra $A$ and $\rho$ is some algebra homomorphism of $A$ to the algebra $B$.
If the elements $\rho(x_1), \rho(x_2), \dots, \rho(x_n)$ are linearly independent in $B$, then they are linearly independent in $A$.
\end{lem}
\begin{proof}
We prove the contrapositive.
Suppose $x_1, x_2, \dots, x_n$ are linearly dependent in $A$.
Then there exist coefficients $k_1, k_2, \dots, k_n \in R$ so that
$
k_1x_1 + k_2x_2 + \dots + k_nx_n = 0
$
and at least one $k_i$ is nonzero.
Since $\rho$ is an $R$-algebra homomophism, 
$
k_1\rho(x_1) + k_2\rho(x_2) + \dots + k_n\rho(x_n) = 0.
$
So $\rho(x_1), \rho(x_2), \dots, \rho(x_n)$ are linearly dependent in $B$ as well.
\\
\end{proof}

\begin{thm} \label{thm:a03}
 $\mathcal A( F_{0,3} ) = R_3  \langle \alpha_1, \alpha_2, \alpha_3 \; |\;  \alpha_i \alpha_{i+1} = \alpha_{i+1} \alpha_i = v_{i+2}^{-1} \delta \, \alpha_{i+2},\,\, v_{i+1} v_{i+2} \alpha_i^2 = \delta^2 \rangle,  $
where $\alpha_i$ is represented by the simple arc connecting the punctures $p_{i+1}$ to $p_{i+2}$ in the thrice-punctured sphere $F_{0,3}$, with $i = 1,2,3$ and indices interpreted modulo 3, and where $\delta = (A^\frac{1}{2} + A^{-\frac{1}{2}})$.
\end{thm}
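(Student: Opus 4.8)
The plan is to classify the simple curves on $F_{0,3}$, invoke Proposition~\ref{lem:arcs} for generation, verify the listed relations directly, and then prove the relations are complete by a rank count resting on Lemma~\ref{lem:linind}. Concretely, I would set $B := R_3\langle \alpha_1,\alpha_2,\alpha_3 \mid \cdots\rangle$ for the abstract algebra on the right-hand side and show that the natural map $\Phi\colon B \to \mathcal A(F_{0,3})$ sending each generator to the corresponding arc skein is an isomorphism.

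First I would identify the simple curves. Every essential simple closed curve on $F_{0,3}$ separates the sphere into two disks, one of which contains at most one puncture; such a curve therefore bounds a disk with one or no punctures and is not a simple knot, so $F_{0,3}$ has \emph{no} simple knots. The only simple curves are the three simple arcs $\alpha_1,\alpha_2,\alpha_3$, where $\alpha_i$ joins $p_{i+1}$ to $p_{i+2}$ and there is a unique isotopy class of embedded essential arc for each pair of punctures. By Proposition~\ref{lem:arcs}, these three arcs generate $\mathcal A(F_{0,3})$, so $\Phi$ is surjective as soon as the relations are known to hold.

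Next I would verify the two families of relations in $\mathcal A(F_{0,3})$ by the same technique used in Theorem~\ref{thm:a02} and Figure~\ref{fig:relnex}. The product $\alpha_i \ast \alpha_{i+1}$ is a framed curve with a single crossing near the shared puncture $p_{i+2}$; applying the Puncture-Skein Relation there resolves it into a scalar multiple of the arc $\alpha_{i+2}$ joining the two free ends $p_i,p_{i+1}$ (both resolutions give the same arc, after deleting a puncture-parallel loop via Relation~4), yielding $\alpha_i\alpha_{i+1}=v_{i+2}^{-1}\delta\,\alpha_{i+2}$; stacking in the opposite order gives the identical answer, so $\alpha_i$ and $\alpha_{i+1}$ commute. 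Similarly $\alpha_i^2$ has crossings at both endpoints, and resolving with the Puncture-Skein Relation at $p_{i+1}$ and $p_{i+2}$ and then simplifying the resulting trivial and puncture-enclosing loops by Relations~3 and~4 gives $v_{i+1}v_{i+2}\alpha_i^2=\delta^2$. These computations confirm the relations, so $\Phi$ is well defined and surjective; and inside $B$ the same relations let me rewrite any monomial as an $R_3$-linear combination of $1,\alpha_1,\alpha_2,\alpha_3$ (commute the factors, reduce each square to a scalar, and collapse each mixed product), so $B$, and hence $\mathcal A(F_{0,3})$, is spanned as an $R_3$-module by these four elements.

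It then remains to show $1,\alpha_1,\alpha_2,\alpha_3$ are linearly independent in $\mathcal A(F_{0,3})$, and here I would use Lemma~\ref{lem:linind}: it suffices to produce one algebra homomorphism $\rho$ out of $\mathcal A(F_{0,3})$ under which the four images are independent. I would build $\rho$ into a computable commutative algebra by evaluating framed curves and checking compatibility with all four defining relations of $\mathcal K$. After adjoining square roots $\mu_i$ of $v_i$, each arc can be sent to $\epsilon_i\,\delta\,\mu_{i+1}^{-1}\mu_{i+2}^{-1}$ with signs $\epsilon_i\in\{\pm1\}$ satisfying $\epsilon_1\epsilon_2\epsilon_3=1$ (the four consistent choices), while trivial and puncture-parallel loops are forced to $-A^2-A^{-2}$ and $A+A^{-1}$ by Relations~3 and~4; assembling the four sign evaluations into a single $\rho$, the matrix recording the values of $1,\alpha_1,\alpha_2,\alpha_3$ at the four points is the character table of $(\Z/2)^2$, which is invertible over a characteristic-zero extension. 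This gives independence of the images, hence of $1,\alpha_1,\alpha_2,\alpha_3$ by Lemma~\ref{lem:linind}. Because $B$ is spanned by these four elements and $\Phi$ carries them to a linearly independent set, $\Phi$ is injective, so it is an isomorphism and the presentation follows. The step I expect to be the main obstacle is exactly the construction of $\rho$: one must verify that the sign evaluations respect the Skein and Puncture-Skein Relations on \emph{all} framed curves, not merely on the generators, which is the one point that does not follow formally from the earlier lemmas.
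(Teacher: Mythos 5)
Your strategy matches the paper's for the first three steps: you classify the simple curves on $F_{0,3}$ (no simple knots, three simple arcs) and invoke Proposition~\ref{lem:arcs} for generation, you verify the displayed relations by resolving the crossings at the shared punctures, and you use those relations to rewrite every word so that $1,\alpha_1,\alpha_2,\alpha_3$ span. The only divergence is in the independence step: the paper applies Lemma~\ref{lem:linind} to a left regular representation given by explicit $4\times 4$ matrices over $R_3$, modeled on the group algebra of $\Z_2\times\Z_2$, whereas you apply it to four one-dimensional evaluations $\alpha_i\mapsto \epsilon_i\,\delta\,\mu_{i+1}^{-1}\mu_{i+2}^{-1}$ with $\epsilon_1\epsilon_2\epsilon_3=1$, after adjoining square roots $\mu_i$ of $v_i$; your characters are essentially the diagonalization of the paper's matrices, and the sign bookkeeping and the invertibility of the resulting character-table matrix are correct. (One small slip: in the mixed product $\alpha_i\ast\alpha_{i+1}$ both resolutions of the puncture-skein relation are already isotopic to $\alpha_{i+2}$; no puncture-parallel loop appears and Relation~4 is not needed there.)

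The gap is the one you flagged at the end, and it is not a technicality --- it is the crux. Lemma~\ref{lem:linind} requires a homomorphism whose \emph{domain} is $\mathcal A(F_{0,3})$. What your construction actually produces --- values on $\alpha_1,\alpha_2,\alpha_3$ checked against the three listed relations --- is a homomorphism out of the abstract presented algebra $B=R_3\langle\alpha_1,\alpha_2,\alpha_3\mid\cdots\rangle$. Applying Lemma~\ref{lem:linind} to that proves $1,\alpha_1,\alpha_2,\alpha_3$ are independent \emph{in $B$}, i.e.\ that $B$ is a free $R_3$-module of rank $4$; it cannot rule out that the surjection $\Phi\colon B\to\mathcal A(F_{0,3})$ has a kernel, which is exactly what the theorem asserts. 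Indeed, if $\mathcal A(F_{0,3})$ satisfied an extra relation, your four characters would still exist on $B$ --- they simply would not factor through $\mathcal A(F_{0,3})$ --- so nothing in the argument as written distinguishes the two possibilities. To finish, you must either carry out the step you postponed, namely define the evaluation on \emph{all} framed curves and verify invariance under all four defining relations of $\mathcal K(F_{0,3})$ (a state-sum/confluence argument, which is the real work), or quote a basis theorem: $\mathcal A(F_{g,n})$ is free as an $R_n$-module on isotopy classes of crossingless diagrams (see \cite{RoYa14}), of which $1,\alpha_1,\alpha_2,\alpha_3$ are four distinct elements. For what it is worth, you located the delicate point precisely: the paper's matrices are likewise only tied to the listed relations, so strictly read they too define a representation of $B$ rather than of $\mathcal A(F_{0,3})$, and independence in $\mathcal A(F_{0,3})$ ultimately rests on such a basis theorem in either approach.
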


\begin{proof}
Since the only simple arcs in $F_{0,3}$ are those connecting distinct punctures, it follows that  $\alpha_1, \alpha_2$, and $\alpha_3$ generate the arc algebra $K[F_{0, 3}]$.  
Observe that
\begin{align*}
\alpha_i ^2 &= \pic{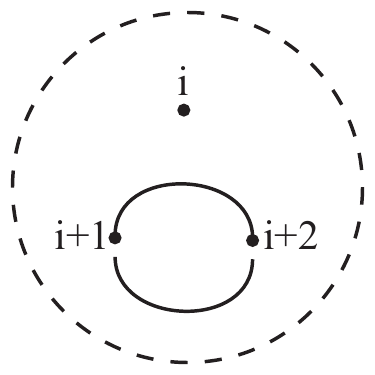}\\
&= v_{i+1}^{-1} v_{i+2}^{-1}\left(A\pic{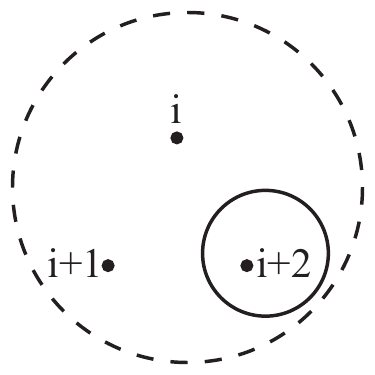} + \pic{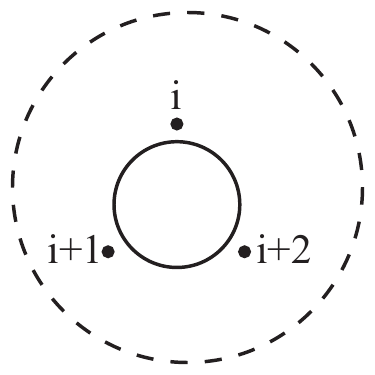} + \pic{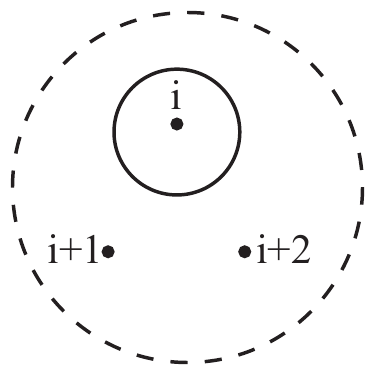} + A^{-1}\pic{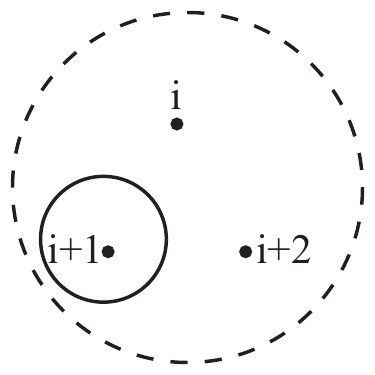}\right) \\
&=v_{i+1}^{-1} v_{i+2}^{-1}(A(A+A^{-1}) + (-A^2-A^{-2}) + (A+A^{-1}) + A^{-1}(A+A^{-1})) \\
&= v_{i+1}^{-1} v_{i+2}^{-1}(A^\frac{1}{2} + A^{-\frac{1}{2}})^2
\\
&= v_{i+1}^{-1} v_{i+2}^{-1}\delta^2.
\end{align*}
Also,
\begin{align*}
\alpha_i \ast \alpha_{i+1}  &= \pic{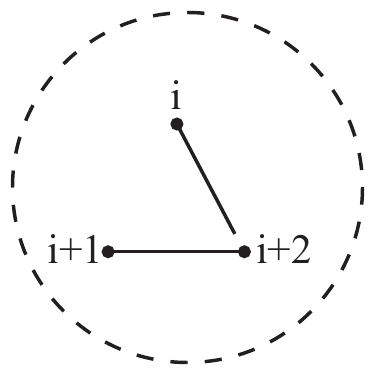} = v_{i+2}^{-1}\left(A^\frac{1}{2}\pic{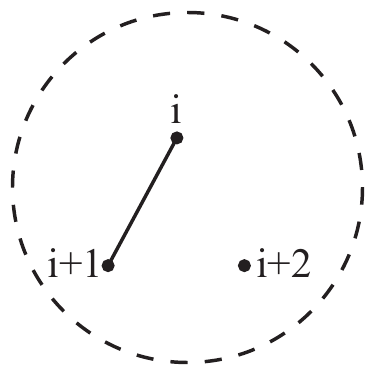} + A^{-\frac{1}{2}}\pic{thma03-7.pdf}\right) \\
&= v_{i+2}^{-1}(A^\frac{1}{2} + A^{-\frac{1}{2}})\alpha_{i+2} 
\\
&= v_{i+2}^{-1}\delta\alpha_{i+2}
\end{align*}
and similarly
\begin{align*}
\alpha_{i+1} \ast \alpha_i&= \pic{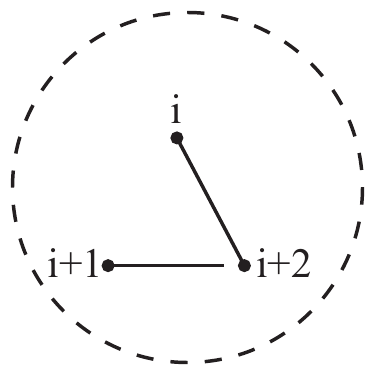} = v_{i+2}^{-1}\left(A^\frac{1}{2}\pic{thma03-7.pdf} + A^{-\frac{1}{2}}\pic{thma03-7.pdf}\right) \\
&= v_{i+2}^{-1}(A^\frac{1}{2} + A^{-\frac{1}{2}})\alpha_{i+2}
\\
&= v_{i+2}^{-1}\delta\alpha_{i+2}.
\end{align*}

We next show that these are the only relations.  Notice that the relations above imply that any product $\alpha_i \ast \alpha_j $ can be rewritten as either a scalar multiple when $i=j$ or as a multiple of the remaining $\alpha_k$, for $k \neq i, j$.  
Thus any word in $\alpha_1$, $\alpha_2$, and $\alpha_3$ can be rewritten in terms of a scalar multiple of one or zero generators.
So any other relation among the generators $\alpha_1$, $\alpha_2$, and $\alpha_3$ can be expressed in the form
\[
k_0 + k_1\alpha_1 +k_2\alpha_2 + k_3\alpha_3 = 0
\]
where $k_i \in R_3$.   We will show that $1, \alpha_1, \alpha_2, \alpha_3$ are linearly independent, so that the $k_i = 0$.

Recall that a left regular representation of a group is the linear representation provided by multiplication of group elements on the left.  Based on the similarity of the  algebra elements $1, \alpha_1, \alpha_2, \alpha_3$ from $\mathcal A( F_{0,3} ) $ with the group elements of $\mathbb Z_2 \times \mathbb Z_2$, we  define a left regular representation $\rho$  for $\mathcal A( F_{0,3} ) $ by: 
\[
\rho(1) =
\begin{bmatrix}
1 & 0 & 0 & 0 \\
0 & 1 & 0 & 0 \\
0 & 0 & 1 & 0 \\
0 & 0 & 0 & 1
\end{bmatrix},
\quad
\rho(\alpha_1) =
\begin{bmatrix} \footnotesize
0 & v_2^{-1} v_3^{-1}\delta^2 & 0 & 0 \\
1 & 0 & 0 & 0 \\
0 & 0 & 0 & v_2^{-1}\delta \\
0 & 0 & v_3^{-1}\delta & 0
\end{bmatrix},
\]
\[
\rho(\alpha_2) =
\begin{bmatrix}
0 & 0 & v_1^{-1}v_3^{-1}\delta^2 & 0 \\
0 & 0 & 0 & v_1^{-1}\delta \\
1 & 0 & 0 & 0 \\
0 & v_3^{-1}\delta & 0 & 0
\end{bmatrix},
\quad
\rho(\alpha_3) =
\begin{bmatrix}
0 & 0 & 0 & v_1^{-1}v_2^{-1}\delta^2 \\
0 & 0 & v_1^{-1}\delta & 0 \\
0 & v_2^{-1}\delta & 0 & 0 \\
1 & 0 & 0 & 0
\end{bmatrix}.
\]
Note that the coefficients from each column are exactly those given by the equations describing left multiplication by $\alpha_i$.  In particular, they are the coefficients in the equations $\alpha_i \ast 1 = \alpha_i$,  $\alpha_i \ast \alpha_i = v_{i+1}^{-1} v_{i+2}^{-1}\delta^2 $, and $\alpha_i \ast  \alpha_j  = v_{i+2}^{-1}\delta \; \alpha_{i+2}$ for $i \neq j$.   
The  matrices $\rho(1)$, $\rho(\alpha_1)$, $\rho(\alpha_2)$ and $\rho(\alpha_3)$ are clearly linearly independent, as can be determined by looking at their first columns.
Thus by Lemma~\ref{lem:linind} the original $1, \alpha_1, \alpha_2, \alpha_3$ are linearly independent.
Hence there are no more relations to be found in $K( F_{0,3} )$.  
This also shows that $\{ \alpha_1, \alpha_2, \alpha_3 \}$ is a minimal set of generators.
\end{proof}

\subsection{Surfaces with $0$ or $1$ Punctures.}  \label{sec:n01}

Recall that $R=\Z[A, A^{-1}]$ is a subring of $R_n = \Z[A^{\frac{1}{2}}, A^{-\frac{1}{2}}][v_1^{\pm1}, v_2^{\pm1}, \dots, v_n^{\pm1}]$, and that  from Lemma~\ref{lem:psi}, there exists an algebra homomorphism $\psi$, which maps the $R$-algebra  $\mathcal S(F_{g,n})$ to the $R_n$-algebra $\mathcal A(F_{g,n})$.   

First observe that when $n=0$, the relations in $\mathcal S(F_{g,0})$ are exactly those in $\mathcal A(F_{g,0})$.  That is, $R_0 \otimes \mathcal K_0(F_{g,0}) \cong \mathcal K(F_{g,0})$.  Moreover, the map $\psi$ from the proof of Lemma~\ref{lem:psi} is injective when $n = 0$ and acts as the identity on simple knots.  
Since all simple curves are simple knots in this case and the image of $\psi$ contains all simple knots, the image of $\psi$ generates all of the arc algebra $\mathcal A(F_{g,0})$  by Lemma \ref{lem:simple}.  
Thus  $R_0 \otimes \mathcal S( F_{g,0}) \cong \mathcal A(F_{g,0})$, and any presentation of $\mathcal S( F_{g,0})$ provides a presentation of $\mathcal A(F_{g,0})$.

When $n=1$, again there are no simple arcs, so that the image of $\psi$ generates all of the arc algebra $\mathcal A(F_{g,1})$.  So any set generating $\mathcal S(F_{g,1})$ also generates $\mathcal A(F_{g,1})$.     However, the map $\psi$ is no longer injective.   Specifically, the relations for the Kauffman skein algebra and the relations for the Kauffman arc algebra will differ; the puncture-framing relation from the Kauffman arc algebra is not a relation in the Kauffman skein algebra.   However, this is the only difference.   
Hence $R_1 \otimes \mathcal S( F_{g,1}) / \mathcal K_\mathrm{pfr} (F_{g,1}) \cong \mathcal A(F_{g,1})$, where  $K_\mathrm{pfr}(F_{g,1})$ is the submodule generated by only the puncture-framing relation.  In summary,  the generators of $\mathcal A(F_{g,1})$ are generators of $\mathcal S(F_{g,1})$, but the relations of $\mathcal A(F_{g,1})$ are relations of $\mathcal S(F_{g,1})$ along with one corresponding to the puncture-framing relation.

\subsubsection{Torus with 0 or 1 punctures} 

As an example, let us examine the case of the closed torus and the torus with one puncture.  From \cite{BuPr00}, we have that the Kauffman skein algebras $\mathcal S(F_{1, 0})$ and $\mathcal S(F_{1,1})$ are both generated as a $\Z[A, A^{-1}]$-module by three simple closed curves $\gamma_1, \gamma_2, \gamma_3$ such that $\gamma_1$ and $\gamma_2$ intersect once and $\gamma_3$ is one of two curves that meet both $\gamma_1 $ and $\gamma_2$ once.  Moreover, if $\partial$ represents a small loop around the puncture of $F_{1,1}$, then 
\begin{equation} \label{eqn:boundary}
\partial =  A \gamma_1 \gamma_2 \gamma_3 - A^2 \gamma_1^2  - A^{-2} \gamma_2^2 -  A^2 \gamma_3^2 + A^2 + A^{-2}.
\end{equation}
In the skein algebra $\mathcal S(F_{1,0})$, we have $\partial = - A^2 - A^{-2}$. 
Up to a change in scalars from $R$ to $R_0$,  a presentation of the arc algebra $\mathcal A(F_{1,0})$ is the same as the presentation of the skein algebra $\mathcal S(F_{1,0})$.  That is,
\begin{eqnarray*}
 \mathcal A(F_{1,0}) = R_0 \; \langle \gamma_1, \gamma_2, \gamma_3  &|&
    A \gamma_i \gamma_{i+1} - A^{-1} \gamma_{i+1} \gamma_i = (A^2 - A^{-2}) \gamma_{i+1} 
    \mbox{ \emph{ and } }\\
\;  
&& -A^2 -  A^{-2} = A\gamma_1 \gamma_2 \gamma_3 - A^2 \gamma_1^2 - A^{-2} \gamma_2^2 -A^2 \gamma_3^2 + A^2 + A^{-2} \rangle
\end{eqnarray*}
where the indices are interpreted modulo 3.
On the other hand,  in the once-punctured torus, we have $\partial = A + A^{-1}$ in the arc algebra.  Thus 
\begin{eqnarray*}
\mathcal A(F_{1,1}) = R_1 \langle \gamma_1, \gamma_2, \gamma_3 \; 
&|& 
\;  A \gamma_i \gamma_{i+1} - A^{-1} \gamma_{i+1} \gamma_i = (A^2 - A^{-2}) \gamma_{i+1} 
\mbox{ \emph{ and } }\\
\;  
&& A+  A^{-1} = A\gamma_1 \gamma_2 \gamma_3 - A^2 \gamma_1^2 - A^{-2} \gamma_2^2 -A^2 \gamma_3^2 + A^2 + A^{-2} \rangle.
\end{eqnarray*}
\\

\noindent {\bf Acknowledgements}.  The authors would like to thank Eric Egge and Francis Bonahon for helpful discussions, and the Carleton College mathematics department for their support and encouragement throughout this research.  

\bibliographystyle{plain}
\nocite{*}

\end{document}